\newtheorem{thm}{Theorem}[section]
\newtheorem{cor}[thm]{Corollary}
\newtheorem{lema}[thm]{Lemma}
\theoremstyle{definition}
\theoremstyle{remark}
\newtheorem{rem}[thm]{Remark}
\numberwithin{equation}{section}
\newcommand{\R}{\mathbb R}
\newcommand{\N}{\mathbb N}
\newcommand{\LL}{\mathcal{L}}
\newcommand{\ve}{\varepsilon}
\def\supp{\mathop{\text{\normalfont supp}}}
\newcommand{\intr}{\int_{\R^n}}
\begin{document}

\title[$\Gamma-$convergence of energy functionals  without the $\Delta_2$ condition]{$\Gamma-$convergence of energy functionals in fractional Orlicz spaces beyond the $\Delta_2$ condition}
	
\author[I. Ceresa Dussel]{Ignacio Ceresa Dussel}

\author[J. Fern\'andez Bonder]{Julian Fern\'andez Bonder}

\author[A. Salort]{Ariel Salort}

\address[I. Ceresa Dussel and J. Fern\'andez Bonder]{Instituto de C\'alculo (UBA - CONICET) and \hfill\break\indent Departamento  de Matem\'atica, FCEyN, Universidad de Buenos Aires, \hfill\break\indent Pabell\'on I, Ciudad Universitaria (1428), Buenos Aires, Argentina.}

\address[A. Salort]{ Universidad CEU San Pablo, Urbanizacion Montepríncipe, s.n. 28668, \hfill\break\indent Madrid, Spain.}

\medskip

\email[I. Ceresa Dussel]{{\tt iceresad@dm.uba.ar}}

\email[J. Fern\'andez Bonder]{jfbonder@dm.uba.ar}
\urladdr{http://mate.dm.uba.ar/~jfbonder}

\email[A. Salort]{amsalor@gmail.com, ariel.salort@ceu.es}
\urladdr{https://sites.google.com/view/amsalort/}

\subjclass[2020]{35R11, 40E30, 74A70}

%35J92  	Quasilinear elliptic equations with $p$-Laplacian
%47G10  	Integral operators
%35R11  	Fractional partial differential equations
%35B27  	Homogenization in context of PDEs; PDEs in media with periodic structure

\keywords{Fractional energies, fractional order Sobolev spaces, homogenization}

\thanks{This work was partially supported by UBACYT Prog. 2018 20020170100445BA and by ANPCyT PICT 2019-00985. J. Fern\'andez Bonder and A. Salort are members of CONICET}

\begin{abstract}
Given a Young function $A$, $n\geq 1$ and $s\in(0,1)$ we consider the energy functional
$$
\mathcal{J}_s(u)=(1-s)\iint_{\R^n\times \R^n} A\left(\frac{|u(x)-u(y)|}{|x-y|^s}\right)\frac{dxdy}{|x-y|^n}.
$$
Without assuming the $\Delta_2$ condition on $A$ not its conjugated function $\bar A$, we prove the following liminf inequality: if $u\in E^A(\R^n)$ and  $\{u_k\}_{k\in\N}\subset E^A(\R^n)$ is such that $u_k\to u$ in $E^A(\R^n)$, and  $s_k\to 1$, then
$$
\mathcal{J}(u) \leq \liminf_{k\to\infty } \mathcal{J}_{s_k}(u_k),
$$
where $\mathcal{J}$ is a limit functional related with the behavior of the fractional Orlicz-Sobolev spaces as $s\to 1^+$. As a direct consequence, we obtain the $\Gamma-$convergence of the functional $\mathcal{J}_s$.

Finally, we extend our result to the study of the so called  \emph{fractional peridynamic} case.
%given by 
%\begin{equation}
%\mathcal{J}_{\delta} (u) = \int \int_{\R^n\times B(x,\delta)} A(|D^s u|)\,d\nu.
%\end{equation}
\end{abstract}

\maketitle

\section{Introduction}
Nonlocal energy functionals given in terms of fractional parameter $s\in (0,1)$ and having the form
\begin{equation} \label{func.p}
(1-s)\iint_{\Omega\times\Omega} \frac{|u(x)-u(y)|^p}{|x-y|^{n+sp}}\,dxdy
\end{equation}
with $\Omega\subset \R^n$, $n\geq 1$,  a smooth bounded domain and $1\leq p <\infty$ play a significant role in   nonlinear analysis. The behavior of \eqref{func.p} as $s\uparrow 1$ in the $W^{1,p}$ norm (resp. BV for $p=1$) was addressed in the seminal work of Bourgain, Brezis and Mironescu \cite{BBM} (see also \cite{Davila}). In recent years, several extensions and variants of these result have been developed. For instance, see  \cite{Ponce, DD, BMR,M, CFB, K, FBS2}.

In the recent year, there has been increasing attention to operators that do not follow  power-like laws. A prototype energy functional in this context can be described using a so-called Young function $A\colon \R^+\to \R$ and a fractional parameter $s\in (0,1)$ as follows
\begin{equation} \label{func}
\mathcal{J}_s (u) = (1-s)\iint_{\R^n\times \R^n} A(|D^s u|)\,d\nu
\end{equation}
where $d\nu = |x-y|^{-n}dxdy$ and $D^s u(x,y)=\frac{u(x)-u(y)}{|x-y|^s}$.
The behavior of \eqref{func} as well as many possible extensions, was addressed recently in several articles, see for instance \cite{ACPS1, ACPS, FBS, FBS1, MSV, CMSV}. In particular, \eqref{func} converges to a limit functional as $s\uparrow 1$. When both $A$ and its conjugated function $\bar A$ satisfy the $\Delta_2$ condition, or which is equivalent, when there exist constants $p^+$ and $p^-$ such that
\begin{equation} \label{cond}
1<p^- \leq \frac{tA'(t)}{A(t)} \leq p^+ <\infty,
\end{equation}
a proof of such convergence can be found in  \cite[Theorem 4.1]{FBS}. For an arbitrary Young function the proof is contained in \cite[Theorem 1.1]{ACPS}. More precisely, these results state that for any function $u\in E^A(\R^n)$ it holds that
\begin{equation} \label{teo.conv}
\lim_{s\uparrow 1}\mathcal{J}_s(u) = \mathcal{J} (u)
\end{equation}
where the limit energy function $\mathcal{J}$ is defined as 
$$
\mathcal{J}(u) = \int_{\R^n}  A_0(|\nabla u|)\,dx
$$
being $A_0$ the Young function defined as follows
\begin{equation}\label{G_0}
A_0(t)=\int_0^t \int_{\mathbb{S}^{n-1}} A(r |\omega\cdot e_n|)\,d \mathcal{H}^{n-1}(\omega)\frac{dr}{r},
\end{equation}
where $\omega$ denotes any fixed unit vector in $\mathbb{S}^{n-1}$. For the precise definition of the functional spaces see Section \ref{sec.prelim}.

The $\Gamma$-convergence of functionals   is a very useful tool in the study of minimum problems arising in the calculus of variations. This is due of the main features of this type of convergence which implies the convergence of minima (see for instance\cite{DM}). 

Regarding the energy functional \eqref{func}, when  \eqref{cond} is assumed, in \cite[Theorem 6.5]{FBS} it is proved that
\begin{equation} \label{teo.gamma.conv1}
\Gamma-\lim_{s\uparrow 1} \bar{\mathcal{J}}_s =\bar {\mathcal{J}}
\end{equation}
where $\bar {\mathcal{J}},\bar {\mathcal{J}}_s \colon L^A(\Omega) \to \bar \R$ are defined by
\begin{align*}
\bar {\mathcal{J}}_s(u) = 
\begin{cases}
\mathcal{J}_s(u) & \text{ if } u\in W^{s,A}_0(\Omega)\\
+\infty & \text{ otherwise},
\end{cases}
\qquad 
\bar {\mathcal{J}}(u) = 
\begin{cases}
\mathcal{J}(u) & \text{ if } u\in W^{1,A_0}_0(\Omega)\\
+\infty & \text{ otherwise}
\end{cases}
\end{align*}
where $\Omega\subseteq \R^n$ is open, $W^{s,A}_0(\Omega)$, $s\in(0,1]$ stand for the fractional Orlicz Sobolev spaces defined in Section \ref{sec.prelim}.

The proof of this result follows by proving the two necessary conditions in the definition of $\Gamma-$convergence, namely:
\begin{itemize}
\item[(i)] (\text{lim inf inequality}) For every sequence $\{u_k\}_{k\in\N}\subset L^A(\Omega)$ such that $u_k\to u$ in $L^A(\Omega)$ and $\{s_k\}_{k\in\N}$ such that $s_k\uparrow 1$,
$$
\bar{\mathcal{J}}(u) \leq \liminf_{k\to\infty} \bar{\mathcal{J}}_{s_k}(u_k).
$$
\item[(ii)] (\text{lim sup inequality}) For every $u\in L^A(\Omega)$ there exists a sequence $\{u_k\}_{k\in\N}\subset L^A(\Omega)$ converging to $u$ such that 
$$
\bar{\mathcal{J}}(u) \geq \limsup_{k\to\infty} \bar{\mathcal{J}}_{s_k}(u_k)
$$
where $\{s_k\}_{k\in\N}$ is such that $s_k\uparrow 1$.
\end{itemize}
The proof of condition (ii)  follows by choosing the constant sequence in \eqref{teo.conv}. However, the challenging part lies in proving the liminf inequality, which cannot be adequately addressed solely by \eqref{teo.conv}. A more refined convergence result is required, one that allows the description of the convergence of sequences of functions. 

Under the assumption of \eqref{cond} the liminf inequality is treated in \cite[Theorem 5.1]{FBS}, where \emph{the case of a sequence} is studied. The proof of that result heavily relies on the doubling condition, making highly non trivial to replicate the same proof without assuming the $\Delta_2$ condition. Hence, to the best of our knowledge, we believe this limitation constitutes the primary obstacle preventing the treatment of the $\Gamma$-convergence of \eqref{func} without assuming the $\Delta_2$ condition on $A$ and its conjugate function $\bar{A}$.

The main aim of this article is to fill this gap. First, we prove that the following liminf inequality holds for the functional  $\mathcal{J}_s$ for an arbitrary Young function $A$ without any growth assumptions. More precisely, in Theorem \ref{teo.liminf} we prove that for any function $u\in E^A(\R^n)$ and any sequence $\{u_k\}_{k\in\N}\subset E^A(\R^n)$ such that $u_k\to u$ in $E^A(\R^n)$, if $\{s_k\}_{k\in\N}$ converges to 1 as $k\to\infty$, then
$$
\mathcal{J}(u) \leq \liminf_{k\to\infty} \mathcal{J}_{s_k}(u_k).
$$
As a direct consequence, in Corollary \ref{coro1} we obtain our first main result:
\begin{equation} \label{teo.gamma.conv}
\Gamma-\lim_{s\uparrow 1} \bar{\mathcal{J}}_s =\bar {\mathcal{J}}
\end{equation}
where $\bar {\mathcal{J}},\bar {\mathcal{J}}_s \colon E^A(\Omega) \to \bar \R$ are defined by
\begin{align} \label{J.lim}
\bar {\mathcal{J}}_s(u) = 
\begin{cases}
\mathcal{J}_s(u) & \text{ if } u\in W^s_0 E^A(\Omega)\\
+\infty & \text{ otherwise},
\end{cases}
\qquad 
\bar {\mathcal{J}}(u) = 
\begin{cases}
\mathcal{J}(u) & \text{ if } u\in W^1_0 E^A(\Omega)\\
+\infty & \text{ otherwise,}
\end{cases}
\end{align}
where $\Omega\subseteq\R^n$ is open and $s\in(0,1]$.

Another interesting topic explored in this work concerns a new variant of the functional \eqref{func}, specifically given an horizon $\delta>0$, the \emph{peridynamic fractional A-energy} is 
\begin{equation}
\mathcal{J}_{\delta} (u) = \int \int_{\R^n\times B(x,\delta)} A(|D^s u|)\,d\nu.
\end{equation}

This operators was studied by Ortega in \cite{ort23} and under certain hypotheses of regular variation at the origin and $\Delta_2$ condition, the author was able to determine the limit as $\delta \to 0$. Moreover the author obtained Gamma convergence results. 

In this work, we demonstrate that without the hypotheses of $\Delta_2$ and regular variation at the origin, the limit as $\delta \to 0$ becomes trivial, with the limit function being the Matuszewska function introduced in \cite{MO60}.

\section{Preliminaries} \label{sec.prelim}

\subsection{Young functions}
A function $A\colon \R_+\to\R_+$ is called a \emph{Young function} if it can be written as
$$
A(t)=\int_0^t a(\tau)\, d\tau,
$$
where $a\colon \R_+\to\R_+$ is nondecreasing, right continuous, $a(t)>0$ if $t>0$, $a(0)=0$ and $a(t)\to\infty$ as $t\to\infty$. 

From this definition, it can readily be checked that $A$ is convex, $A(0)=0$ and it is superlinear at $0$ and at $\infty$, i.e.
$$
\lim_{t\to 0} \frac{A(t)}{t}=0,\quad \lim_{t\to\infty} \frac{A(t)}{t}=\infty.
$$
Observe that the convexity of the Young function immediately gives that
\begin{equation} \label{convexidad}
A(\tau t) \leq \tau A(t) \quad \text{ for } 0<\tau<1, \qquad 
A(\tau t) \geq \tau A(t) \quad \text{ for } \tau>1.
\end{equation}

Given a Young function $A$, an important associated function is the so-called \emph{complementary function}, denoted by $\bar A$, that is defined as
$$
\bar A(t):=\sup \{\tau t- A(\tau)\colon \tau\geq 0\}.
$$
The complementary function $\bar A$ is also a Young function.

Observe that $\bar A$ is the optimal Young function in the {\em Young-type inequality}
$$
\tau t\le \bar A(\tau) + A(t)
$$
for all $\tau,t\geq 0$. It is also easy to see that $\bar{\bar A} = A$.

A Young function $A$ is said to verify the $\Delta_2$ condition if there exists  a positive constant $C$ such that
$$
A(2t)\leq CA(t) \quad \text{ for }t\geq 0.
$$
Similarly, a Young function $A$ is said to verify the $\Delta_2^0$ condition if and only if there exist $C_0\geq 2$ and $T_0>0$ such that 
$$
A(2t)\leq C_0A(t) \quad \text{ for }t\leq T_0.
$$
Given a Young function $A$ it holds that
$$
A(t)\leq ta(t) \leq A(2t)
$$
which easily follows from the following expressions
$$
A(2t)=\int_0^{2t} a(\tau)\,d\tau >\int_t^{2t} a(\tau)\,d\tau>ta(t), \qquad A(t)=\int_0^t a(\tau)\,d\tau \leq ta(t).
$$

\subsection{Orlicz and Orlicz-Sobolev spaces}
For an introduction to Orlicz spaces we refer \cite{KR, Gossez}. 

Fractional order Orlicz-Sobolev spaces, as we will use them here, were introduced in \cite{FBS19} and then further analysed by several authors. The results used in this paper are found in \cite{ACPS2, ACPS} and in \cite{FBSp24}.

\subsubsection{Orlicz spaces}
Given a bounded domain  $\Omega\subset \R^n$ and a Young function $A$, the Orlicz class is defined as
$$
\LL^A(\Omega) :=\left\{u\in L^1_\text{loc}(\Omega)\colon \int_\Omega A(|u|)\, dx<\infty\right\}.
$$ 
Then the Orlicz space $L^A(\Omega)$ is defined as the linear hull of $\LL^A(\Omega)$ and is characterized as
$$
L^A(\Omega) = \left\{u\in L^1_\text{loc}(\Omega)\colon \text{ there exists } k>0 \text{ such that }  \int_\Omega A\left(\frac{|u|}{k}\right)\, dx<\infty\right\}.
$$
In general the Orlicz class is strictly smaller than the Orlicz space, and $\LL^A(\Omega) = L^A(\Omega)$ if and only if $A$ satisfies the $\Delta_2$ condition.

The space $L^A(\Omega)$ is a Banach space when it is endowed, for instance, with the {\em Luxemburg norm}, i.e.
$$
\|u\|_{L^A(\Omega)} = \|u\|_A :=\inf\left\{k>0\colon  \int_\Omega A\left(\frac{|u|}{k}\right)\, dx\le 1\right\}.
$$

This space $L^A(\Omega)$ turns out to be separable if and only if $A$ satisfies the $\Delta_2$ condition.

An important subspace of $L^A(\Omega)$ is $E^A(\Omega)$ that it is defined as the closure of the functions in $L^A(\Omega)$ that are bounded. This space is characterized as
$$
E^A(\Omega) = \left\{u\in L^1_\text{loc}(\Omega)\colon  \int_\Omega A\left(\frac{|u|}{k}\right)\, dx<\infty \text{ for all } k>0\right\}.
$$
This subspace $E^A(\Omega)$ is separable, and we have the inclusions
$$
E^A(\Omega)\subset \LL^A(\Omega)\subset L^A(\Omega)
$$
with equalities if and only if $A$ satisfies the $\Delta_2$ condition.

\noindent The following duality relation holds
$$
(E^A(\Omega))^* = L^{\bar A}(\Omega),
$$
where the equality is understood  via the standard duality pairing. Observe that this automatically implies that $L^A(\Omega)$ is reflexive if and only if $A, \bar A$ satisfy the $\Delta_2$ condition.

\subsubsection{Orlicz-Sobolev spaces}
Once the Orlicz spaces are defined, one can easily define the Orlicz-Sobolev spaces in the usual way, namely
$$
W^1L^A(\Omega) = \{u\in L^A(\Omega)\colon \partial_{x_i}u\in L^A(\Omega), i=1,\dots,n\}
$$
and
$$
W^1E^A(\Omega) = \{u\in E^A(\Omega)\colon \partial_{x_i}u\in E^A(\Omega), i=1,\dots,n\}.
$$
These two spaces coincide only if $A$ satisfies the $\Delta_2$ condition and in this case we will denote the Orlicz-Sobolev space as
$$
W^{1, A}(\Omega) = W^1L^A(\Omega) = W^1E^A(\Omega).
$$
These spaces are endowed with the natural norm
$$
\|u\|_{W^1L^A(\Omega)} = \|u\|_{1, A} = \|u\|_A + \|\nabla u\|_A.
$$
With this norm, $W^1L^A(\Omega)$ is a Banach space and $W^1E^A(\Omega)$ is a closed subspace.

In order to work with Dirichlet boundary conditions it is necessary to define the Orlicz-Sobolev functions that vanish at the boundary. So, we define $W^1_0L^A(\Omega)$ as the closure of $C^\infty_c(\Omega)$ with respect to the topology $\sigma(W^1L^A(\Omega), W^1E^{\bar A}(\Omega))$ and $W^1_0 E^A(\Omega)$ as the closure of $C^\infty_c(\Omega)$ in norm topology. 

Again, when $A\in \Delta_2^\infty$ these spaces coincide and will be denoted as
$$
W^{1, A}_0(\Omega) = W^1_0 L^A(\Omega) = W^1_0 E^A(\Omega).
$$
These spaces are reflexive if and only if both $A, \bar A\in \Delta_2^\infty$. In any other case, what we have is that there exists a Banach space $X$ such that $X^* = W^1_0 L^A(\Omega)$. This Banach space $X$ can be easily characterized in terms of distributions, but it will not be used in this article.

\subsubsection{Fractional Orlicz-Sobolev spaces}
The construction of the fractional order Orlicz-Sobolev spaces is almost the same as the Orliz-Sobolev spaces. We will make only a sketch and the reader can consult with \cite{FBSp24}.

Given a fractional parameter $s\in (0,1)$, we define the H\"older quotient of a function $u\in L^A (\Omega)$ as
$$
D^su(x,y) = \frac{u(x)-u(y)}{|x-y|^s}.
$$
Then, the fractional Orlicz-Sobolev space of order $s$ is defined as
$$
W^sL^A(\R^n) := \{u\in L^A(\R^n)\colon D^su\in L^A(\R^{2n}, d\nu_n)\},
$$
where $d\nu_n = |x-y|^{-n}dxdy$ and
$$
W^sE^A(\R^n) := \{u\in E^A(\R^n)\colon D^su\in E^A(\R^{2n}, d\nu_n)\}.
$$
Since we are now working in the whole space, in order for these spaces to coincide we require that $A\in \Delta_2$. In this case, we denote
$$
W^{s, A}(\R^n) =W^sL^A(\R^n)= W^sE^A(\R^n).
$$
As in the previous case, we have that $W^s L^A(\R^n)$ is reflexive if and only if $A, \bar A\in \Delta_2$.

In these spaces the norm considered is
$$
\|u\|_{W^sL^A(\R^n)} = \|u\|_{s, A} = \|u\|_A + \|D^su\|_{A, d\nu_n}.
$$
Again, with this norm, $W^sL^A(\R^n)$ is a Banach space and $W^sE^A(\R^n)$ is a closed subspace.

%The space $W^s_0 L^A(\Omega)$ is then defined as the closure of $C^\infty_c(\Omega)$ with respect to the topology $\sigma(W^sL^A(\R^n), W^sE^{\bar A}(\R^n))$ and $W^s_0 E^A(\Omega)$ as the closure of $C^\infty_c(\Omega)$ in norm topology. 
%
%
%Also, in \cite{FBSp24} it is shown that for any $A$, there exists a Banach space $X$ such that $X^*= W^s_0 L^A(\Omega)$ and that the extension of Proposition \ref{embeddings} to the fractional case holds:
%\begin{prop}\label{embeddings-s}
%Let $A$ be a Young function. Then the embedding $W^s_0 L^A(\Omega) \subset L^A(\Omega)$ is compact.
%\end{prop} 
%

\subsection{Regularized functions}

As usual, we denote by $\rho\in C^\infty_c(\R^n)$ the standard mollifier where $\supp(\rho)=B_1(0)$ and $\rho_\ve(x)=\ve^{-n}\rho(\tfrac{x}{\ve})$ is the approximation of the identity. It follows that $\{\rho_\ve\}_{\ve>0}$ is a familiy of positive functions satisfying 
\begin{equation} \label{regu}
\rho_\ve\in C_c^\infty(\R^n), \quad \supp(\rho_\ve)=B_\ve(0), \quad \intr \rho_\ve\,dx=1.
\end{equation}
Given $u\in L^A(\R^n)$ we define the regularized functions $u_\ve\in L^A(\R^n)\cap C^\infty(\R^n)$ as
\begin{equation} \label{regularizada}
u_\ve(x)=u*\rho_\ve(x).
\end{equation}
The following estimate will be of use for our purposes.

\begin{lema} \label{lema.desig}
Let $u\in C^2_c(\R^n)$ and $\rho\in C^\infty_c(\R^n)$. Then
$$
\|u* \rho\|_{C^2} \leq C_\rho \|u\|_A
$$
where $C$ depends only on $A$, $\rho$ and their derivatives up to order 2.
\end{lema}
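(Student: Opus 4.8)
The plan is to reduce the estimate to two classical ingredients: the fact that a derivative of a convolution always lands on the smooth factor, and the Hölder inequality in Orlicz spaces. First I would observe that, since $\rho\in C^\infty_c(\R^n)$, for every multi-index $\alpha$ with $|\alpha|\le 2$ one may differentiate under the integral sign to obtain
$$
\partial^\alpha(u*\rho)(x) = (u*\partial^\alpha\rho)(x) = \intr u(y)\,\partial^\alpha\rho(x-y)\,dy,
$$
the interchange being justified because the integrand is dominated by $\|\partial^\alpha\rho\|_\infty\,|u|$ on the compact set $x-\supp(\rho)$, and $u$ is locally integrable. In this way all the regularity needed for the $C^2$ norm is transferred onto $\rho$, and what remains is to bound each of these integrals pointwise in $x$ by $\|u\|_A$.

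For the second step, I would fix $x$ and apply the Hölder inequality for Orlicz spaces, which gives
$$
\left|\,\intr u(y)\,\partial^\alpha\rho(x-y)\,dy\,\right| \le 2\,\|u\|_A\,\|\partial^\alpha\rho(x-\cdot)\|_{\bar A}.
$$
The crucial point is that the $\bar{A}$-Luxemburg norm on the right is independent of $x$: by translation invariance of Lebesgue measure one has $\|\partial^\alpha\rho(x-\cdot)\|_{\bar A}=\|\partial^\alpha\rho\|_{\bar A}$. It then only remains to check that this quantity is finite. Since $\partial^\alpha\rho$ is bounded, say $|\partial^\alpha\rho|\le M_\alpha$, and supported in a ball $B_R$ with $R$ the radius of $\supp(\rho)$, choosing $k$ large enough that $|B_R|\,\bar A(M_\alpha/k)\le 1$ yields $\int_{\R^n}\bar A(|\partial^\alpha\rho|/k)\,dy\le 1$, whence $\|\partial^\alpha\rho\|_{\bar A}\le k<\infty$. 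Taking the supremum over $x$ and over $|\alpha|\le 2$ and summing gives
$$
\|u*\rho\|_{C^2}\le C_\rho\,\|u\|_A, \qquad C_\rho = 2\!\!\max_{|\alpha|\le 2}\|\partial^\alpha\rho\|_{\bar A},
$$
a constant depending only on $\rho$ and its derivatives up to order $2$ and, through $\bar{A}$, on $A$.

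I do not expect a genuine obstacle here; this is essentially a routine estimate. The only point worth flagging is that the finiteness of $\|\partial^\alpha\rho\|_{\bar A}$ must hold \emph{without} any $\Delta_2$ assumption on $\bar{A}$, since the whole paper operates beyond the $\Delta_2$ setting. This causes no difficulty, because the Luxemburg norm of any bounded function supported on a set of finite measure is always finite regardless of growth conditions, as the elementary computation above shows. Thus the lemma holds for an arbitrary Young function $A$, and the hypothesis $u\in C^2_c(\R^n)$ is used only to guarantee $u\in E^A(\R^n)$ so that $\|u\|_A<\infty$.
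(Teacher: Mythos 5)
Your proof is correct and follows essentially the same route as the paper's: differentiate the convolution so all derivatives fall on $\rho$, apply the Orlicz--H\"older inequality $|u*\partial^\alpha\rho(x)|\le \|u\|_A\,\|\partial^\alpha\rho(x-\cdot)\|_{\bar A}$, and use translation invariance of the norm. Your additional checks (justifying the interchange of derivative and integral, and verifying $\|\partial^\alpha\rho\|_{\bar A}<\infty$ for a bounded compactly supported function without any $\Delta_2$ assumption on $\bar A$) are details the paper leaves implicit, and the last one is a worthwhile observation given the paper's beyond-$\Delta_2$ setting.
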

\begin{proof}
Let $u\in C^2_c(\R^n)$ and $\rho\in C^\infty_c(\R^n)$. Using H\"older's inequality we get that
\begin{align*}
|\partial^\alpha(u*\rho)(x)|=|u* \partial^\alpha \rho(x)| &\leq \int_{\R^n} |u(y)\partial^\alpha \rho(x-y)|\,dy\\
&\leq \|u\|_A \|\partial^\alpha \rho(x-\cdot)\|_{\bar A}\leq C_{\rho,\alpha,A}\|u\|_A 
\end{align*}
since the Orlicz norm is invariant under translations,
where $\partial^\alpha \rho$ denotes a partial derivative of $\rho$ of order $\alpha=(\alpha_1,\ldots, \alpha_n)\in \N_0^n$ with $|\alpha|=\alpha_1+\cdots+\alpha_n \leq 2$. 

This gives the lemma.
\end{proof}

\section{Proofs of the results}

When $G$ satisfies the $\Delta_2$ condition, in \cite[Lemma 2.13]{FBS} it proved that  the energy functional decreases under regularization of the function. We claim that the same holds without assuming any growth behavior on $G$.

\begin{lema} \label{lema.reg}
Let $G$ be a Young function and let $u\in L^A(\R^n)$ and $\{u_\ve\}_{\ve>0}$ be the family defined in \eqref{regularizada}. Then
$$
\mathcal{J}_s (u_\ve)  \leq  \mathcal{J}_s (u)
$$
for all $\ve>0$ and $0<s<1$.
\end{lema}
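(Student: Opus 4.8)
The plan is to exploit the \emph{convexity} of the Young function together with the fact that mollification against $\rho_\ve$ is an averaging operation, so that Jensen's inequality transfers the estimate from $u_\ve$ back to $u$. First I would observe that, since the common shift in the two arguments does not affect the denominator $|x-y|^s$, the H\"older quotient of the regularized function is itself a $\rho_\ve$-average of translated H\"older quotients of $u$. Concretely, writing $u_\ve(x)=\intr \rho_\ve(z)\,u(x-z)\,dz$ and subtracting the analogous expression at $y$, one obtains
\begin{equation*}
D^s u_\ve(x,y)=\intr \rho_\ve(z)\, D^s u(x-z,y-z)\,dz.
\end{equation*}

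Next, since $\rho_\ve\ge 0$ and $\intr\rho_\ve\,dz=1$ by \eqref{regu}, the right-hand side is a genuine average, so the convexity of the Young function defining the energy (recall $A(0)=0$ and \eqref{convexidad}) together with Jensen's inequality yields, for a.e. $(x,y)$,
\begin{equation*}
A(|D^s u_\ve(x,y)|)\le \intr \rho_\ve(z)\, A(|D^s u(x-z,y-z)|)\,dz.
\end{equation*}
I would then integrate this pointwise bound against the measure $d\nu=|x-y|^{-n}\,dx\,dy$ and multiply by $(1-s)$. Because every integrand is nonnegative, Tonelli's theorem permits interchanging the order of integration, giving
\begin{equation*}
\mathcal{J}_s(u_\ve)\le (1-s)\intr \rho_\ve(z)\left(\iint_{\R^n\times\R^n} A(|D^s u(x-z,y-z)|)\,d\nu\right)dz.
\end{equation*}
For each fixed $z$ the inner double integral equals $(1-s)^{-1}\mathcal{J}_s(u)$ after the change of variables $x\mapsto x+z$, $y\mapsto y+z$, which leaves both $|x-y|$ and the product measure $dx\,dy$ invariant. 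Using once more that $\rho_\ve$ integrates to $1$ gives exactly $\mathcal{J}_s(u_\ve)\le \mathcal{J}_s(u)$.

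The argument is free of the $\Delta_2$ restriction precisely because the only analytic tools invoked are convexity (for Jensen) and nonnegativity (for Tonelli), neither of which requires any growth control on the Young function; in particular no appeal to the density results that fail outside $\Delta_2$ is needed. One also never has to assume $\mathcal{J}_s(u)<\infty$: if it is infinite the inequality is trivial, while if it is finite Tonelli still applies verbatim since the integrands never change sign. The only mild technical points to check are the joint measurability of $(x,y,z)\mapsto A(|D^s u(x-z,y-z)|)$ and the validity of Jensen's inequality for a.e.\ $(x,y)$; both are routine and, thanks to nonnegativity, carry no hidden integrability hypotheses. This is the single place where the proof of \cite[Lemma 2.13]{FBS} must be adapted away from the doubling condition, and the adaptation is immediate.
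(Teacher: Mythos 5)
Your proof is correct and takes essentially the same approach as the paper's: both write the H\"older quotient of $u_\ve$ as a $\rho_\ve$-average of translated H\"older quotients of $u$, apply Jensen's inequality via the convexity of $A$, and conclude using nonnegativity (Tonelli/Fubini) together with the translation invariance of the kernel $|x-y|^{-n}\,dx\,dy$. The only difference is organizational — the paper integrates first in $x$ for fixed increment $h=y-x$ and then in $h$, while you work directly in the variables $(x,y,z)$ — so no changes are needed.
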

\begin{proof}
Since $A$ is convex and using the last property in \eqref{regu} we can use Jensen's inequality to obtain that for each $x\in \R^n$ and $h\in \R^n\setminus\{0\}$ it holds that
\begin{align} \label{re1}
\begin{split}
A\left(\frac{|u_\ve(x+h)-u_\ve(x)|}{|h|^s} \right) &\leq A\left( \int_{\R^n} \frac{|u(x+h-y) - u(x-y)|}{|h|^s} \rho_\ve(y)\,dy \right)\\
&\leq \int_{\R^n} A\left(  \frac{|u(x+h-y)-u(x-y)|}{|h|^s}\right) \rho_\ve(y)\,dy.
\end{split}
\end{align}
Integrating \eqref{re1} over $\R^n$ and using again \eqref{regu}, for each $h\in \R^n\setminus\{0\}$ we get
\begin{align*}
\int_{\R^n} A\left( \frac{|u_\ve(x+h)-u_\ve(x)|}{|h|^s} \right)\frac{dx}{|h|^n} &\leq \int_{\R^n} \left( \frac{|u(x+h-y)- u(x-y)|}{|h|^s} \right)\frac{dx}{|h|^n}\\
&= \int_{\R^n} \left( \frac{|u(x+h-y) - u(x-y)|}{|h|^s} \frac{dx}{|h|^n}\right)\rho_\ve(y)\,dy\\
&=\int_{\R^n} A\left(  \frac{|u(x+h)-u(x)|}{|h|^s}   \right) \frac{dx}{|h|^n}.
\end{align*}
Finally, integrating the last expression over $\R^n$ and performing the change of variables $y=x+h$ gives the result.
\end{proof}

When assuming \eqref{cond}, in  \cite[Lemma 4.3]{FBS} it is proved the convergence of the energy functional $\mathcal{J}_s$ for smooth functions. We claim that the same holds without assuming any growth condition on the Young function $A$, and moreover, the convergence is uniform on bounded sets of $C^2$.

\normalcolor
\begin{lema} \label{lema.converg.unif}
Let $A$ be a Young function and let $\mathcal{B}\subset C^2(\R^n)$ be such that there is $C>0$ for which $\|u\|_{C^2}\leq C$ for all $u\in B$. Then it holds
$$
\mathcal{J}_s  \to \mathcal{J}
$$
uniformly in $\mathcal{B}$ as $s\uparrow 1$.
\end{lema}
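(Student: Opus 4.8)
The plan is to establish convergence of the integrand after reducing to a one–parameter radial integral, and then integrate in $x$. Writing $h=y-x$ and using Fubini, one has $\mathcal{J}_s(u)=\int_{\R^n}\Phi_s(x)\,dx$ with
$$
\Phi_s(x)=(1-s)\int_{\R^n} A\Big(\tfrac{|u(x+h)-u(x)|}{|h|^s}\Big)\frac{dh}{|h|^n}.
$$
Since in the application the elements of $\mathcal{B}$ are supported in a common ball $B_\Lambda$ (so that all the integrals below are finite), it suffices to control $\Phi_s$ on $B_{\Lambda+1}$. I split $\Phi_s=\Phi_s^{\mathrm{near}}+\Phi_s^{\mathrm{far}}$ according to $|h|\le 1$ and $|h|>1$, and show $\int \Phi_s^{\mathrm{far}}\to 0$ and $\int\Phi_s^{\mathrm{near}}\to \int A_0(|\nabla u|)$, both uniformly over $\mathcal{B}$.

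For the far part I would only use $|u(x+h)-u(x)|\le 2\|u\|_\infty\le 2C$ together with the monotonicity of $A$, so that in polar coordinates and after the substitution $\tau=2Cr^{-s}$,
$$
\Phi_s^{\mathrm{far}}(x)\le (1-s)|\Sn|\int_1^\infty A\Big(\tfrac{2C}{r^s}\Big)\frac{dr}{r}= (1-s)\tfrac{|\Sn|}{s}\int_0^{2C} A(\tau)\tfrac{d\tau}{\tau}.
$$
The last integral is finite because $A$ is superlinear at the origin, so $A(\tau)/\tau$ is integrable near $0$; the whole bound is $(1-s)$ times a constant depending only on $C$, and integrating over the common support gives a uniform bound tending to $0$. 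The near part is the heart of the matter. In polar coordinates and after the change of variables $t=r^{1-s}$, which is exactly what absorbs the factor $1-s$ since $(1-s)\tfrac{dr}{r}=\tfrac{dt}{t}$, one obtains
$$
\Phi_s^{\mathrm{near}}(x)=\int_0^1\int_{\Sn} A\Big(\tfrac{|u(x+r\omega)-u(x)|}{r^s}\Big)\,d\mathcal{H}^{n-1}(\omega)\,\tfrac{dt}{t},\qquad r=t^{1/(1-s)}.
$$
A second–order Taylor expansion, controlled by $\|D^2u\|_\infty\le C$, shows that the argument of $A$ equals $t\,|\nabla u(x)\cdot\omega|$ up to an error bounded by $\tfrac{C}{2}\,t\,t^{1/(1-s)}$; and by the rotational invariance of the surface measure together with the substitution $r=t|\xi|$ one identifies the target as $A_0(|\xi|)=\int_0^1\int_{\Sn}A(t|\xi\cdot\omega|)\,d\mathcal{H}^{n-1}(\omega)\tfrac{dt}{t}$, where $\xi=\nabla u(x)$.

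The decisive step, and the place where the absence of $\Delta_2$ must be circumvented, is the passage $A\big(t|\xi\cdot\omega|+\mathrm{error}\big)\to A\big(t|\xi\cdot\omega|\big)$ inside the integral. Here I would not invoke any doubling inequality; instead I observe that for $t\le 1$ every argument involved lies in the fixed interval $[0,\tfrac{3}{2}C]$, on which convexity of $A$ yields the Lipschitz bound $|A(b)-A(b')|\le a(\tfrac{3}{2}C)\,|b-b'|$. Combined with the Taylor error this gives
$$
\big|\Phi_s^{\mathrm{near}}(x)-A_0(|\nabla u(x)|)\big|\le \tfrac{C}{2}\,a(\tfrac{3}{2}C)\,|\Sn|\int_0^1 t^{1/(1-s)}\,dt=\tfrac{C}{2}\,a(\tfrac{3}{2}C)\,|\Sn|\,\tfrac{1-s}{2-s},
$$
a bound depending only on $A,C,n$ and vanishing as $s\uparrow1$. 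Since it is uniform in $x$ and in $u\in\mathcal{B}$, and since both $\Phi_s^{\mathrm{near}}$ and $A_0(|\nabla u|)$ vanish outside $B_{\Lambda+1}$, integrating over $B_{\Lambda+1}$ yields the uniform convergence of the near part, and adding the far part concludes. I expect the main obstacle to be precisely this last step: without $\Delta_2$ one cannot compare $A$ at comparable arguments by a fixed multiplicative constant, so the argument must instead keep every argument of $A$ in a bounded range — made possible by the $t=r^{1-s}$ substitution — and rely on the local Lipschitz continuity coming from convexity, together with the integrability of $A(\tau)/\tau$ near $0$ used for the far part.
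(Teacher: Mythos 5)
Your proposal is correct in substance but takes a genuinely different route from the paper, and it contains one repairable slip. The paper's proof is assembled from citations: it imports the pointwise limit $\mathcal{F}_s(x)\to A_0(|\nabla u(x)|)$ from the cited Lemma 4.3 of [FBS], observes that the Taylor-type error bound \eqref{cota.error} upgrades this to uniformity on $C^2$-bounded sets, and then builds the integrable majorant \eqref{cota1}--\eqref{cota2} to conclude by dominated convergence. You instead prove everything directly and quantitatively: the substitution $t=r^{1-s}$ (so that $(1-s)\,dr/r=dt/t$), the observation that every argument of $A$ then lies in the fixed interval $[0,\tfrac32 C]$, and the local Lipschitz estimate $|A(b)-A(b')|\le a(\tfrac32 C)|b-b'|$ coming from convexity yield the explicit uniform rate $\tfrac{C}{2}a(\tfrac32 C)|\Sn|\tfrac{1-s}{2-s}$ for the near part, with no doubling hypothesis and no dominated convergence; your identification of the limit via rotational invariance and $r=t|\xi|$ correctly recovers $A_0$ from \eqref{G_0}, and your far-part computation $(1-s)\tfrac{|\Sn|}{s}\int_0^{2C}A(\tau)\tfrac{d\tau}{\tau}$ is valid, the integral being finite by superlinearity of $A$ at the origin. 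Your added hypothesis of a common support ball is not in the statement of the lemma, but it mirrors the paper's own reduction (``we can assume without loss of generality that $\supp(u)\subset B_R(0)$''), so it is consistent with how the paper itself reads the result. What your version buys is an explicit $O(1-s)$ rate and full self-containedness; what the paper's buys is brevity by leaning on [ACPS, Lemma 3.4] and [FBS].

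The genuine gap is in the bookkeeping of the far part: it is \emph{not} true that it suffices to control $\Phi_s$ on $B_{\Lambda+1}$. The near part does vanish outside $B_{\Lambda+1}$ (there $u(x)=u(x+h)=0$ for $|h|\le1$), but $\Phi_s^{\mathrm{far}}(x)$ does not: for $|x|$ arbitrarily large there are $h$ with $|h|>1$ and $x+h\in B_\Lambda$, so the integrand is nonzero, and your bound on $\Phi_s^{\mathrm{far}}$ is constant in $x$, hence not integrable over $\R^n$; the phrase ``integrating over the common support'' silently discards the tail $\int_{|x|>\Lambda+1}\Phi_s^{\mathrm{far}}\,dx$. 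The repair is short. Since the integrand vanishes unless $x\in B_\Lambda$ or $y=x+h\in B_\Lambda$, and since $A\bigl(|u(x)-u(y)|/|x-y|^s\bigr)|x-y|^{-n}$ is symmetric under $x\leftrightarrow y$, you get $\int_{\R^n}\Phi_s^{\mathrm{far}}(x)\,dx\le 2\int_{B_\Lambda}\Phi_s^{\mathrm{far}}(x)\,dx\le 2|B_\Lambda|\,(1-s)\tfrac{|\Sn|}{s}\int_0^{2C}A(\tau)\tfrac{d\tau}{\tau}\to0$ uniformly over $\mathcal{B}$. Alternatively, reproduce the paper's decay estimate \eqref{cota2}: for $|x|\ge2\Lambda$ one has $|x-y|\ge\tfrac12|x|$ on the support of the integrand, and the convexity inequality $A(\tau t)\le\tau A(t)$ for $\tau<1$ from \eqref{convexidad} gives $\Phi_s^{\mathrm{far}}(x)\le C_1|x|^{-(n+\frac12)}$ for $s\ge\tfrac12$, which is integrable in $x$. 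With either patch your proof is complete, and in the near part it is sharper than the paper's argument.
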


\begin{proof}
Let   $A$ be an arbitrary Young function (without any assumption on its growth behavior). 
In \cite[Lemma 3.4]{ACPS} it is proved that   for any $u\in C_c^2(\R^n)$ 
$$
\mathcal{J}_s(u) \leq \mathcal{J}(u) + \frac{1-s}{s}  n\omega_n\int_{\R^n} A(2|u|)\,dx.
$$
On the other hand, for any $x\in \R^n$ fixed,  \cite[Lemma 4.3]{FBS} states that 
\begin{equation} \label{conv.puntual}
\lim_{s\uparrow 1} \mathcal{F}_s(x) = A_0(|\nabla u(x)|)
\end{equation}
where 
$$
\mathcal{F}_s(x) := (1-s)\int_{\R^n}A(|D^s u|)\frac{dy}{|x-y|^n}.
$$

In light of \eqref{conv.puntual}, in order to prove that
\begin{equation} \label{conve.J}
\lim_{s\uparrow 1 }\mathcal{J}_s(u)=\mathcal{J}(u)
\end{equation}
it only remains to show the existence of an integrable majorant for $\mathcal{F}_s(x)$. Moreover, an inspection of the proof of \eqref{conv.puntual} reveals that
\begin{equation} \label{cota.error}
\left| A(|D^s u|) - A\left(\left|\nabla u(x) \cdot \frac{x-y}{|x-y|^s} \right|\right) \right| \leq C |x-y|^{2-s}
\end{equation}
where $C$ only depends of the $C^2-$norm of $u$. This gives that the convergence in \eqref{conve.J} is in fact uniform on bounded subsets of $C^2(\R^n)$.

As in \cite[Theorem 4.1]{FBS}, we can assume without loss of generality that $\supp(u)\subset B_R(0)$ with $R>1$. Then, without any assumption on $A$, in \cite[Theorem 4.1]{FBS} it is proved that when $|x|<2R$
\begin{equation} \label{cota1}
|\mathcal{F}_s(x)|\leq n\omega_nA(\|\nabla u\|_\infty)   + \frac{1-s}{s} n\omega_n A(2\|u\|_\infty).
\end{equation}
When $|x|\geq 2R$ the function $u$ vanishes and 
$$
\mathcal{F}_s(x)=(1-s)\int_{B_R(0)} A\left(\frac{|u(y)|}{|x-y|^s}\right)\frac{dy}{|x-y|^n}.
$$
Since $|x-y|\geq |x|-R\geq \tfrac{1}{2}|x|$, (and since $|x|\geq 2$) we get that
\begin{align} \label{cota2}
\begin{split}
|\mathcal{F}_s(x)|&\leq \frac{2^s}{|x|^n}\int_{B_R(0)} A\left( \frac{2^s |u(y)|}{|x|^s}\right)\,dy 
\leq 
\frac{2^s}{|x|^{n+s}}\int_{B_R(0)} A\left( 2^s |u(y)|\right)\,dy\\
&\leq 
\frac{2^s}{|x|^{n+\frac12}}\int_{B_R(0)} A\left( 2^s |u(y)|\right)\,dy<\infty,
\end{split}
\end{align}
since $u\in C^2_c(\R^n)$.

Then, from \eqref{cota1} and \eqref{cota2} we get that
$$
|\mathcal{F}_s(x)| \leq C_1\left( \chi_{B_R(0)}(x) + \frac{1}{|x|^{n+\frac12}} \chi_{B_R(0)^c}(x)\right) \in L^1(\R^n)
$$
with $C_1>0$ depending on $n$ and $u$ but independent of $s$. Then \eqref{conve.J} follows from the Dominated convergence Theorem.
\end{proof}

The following observation is the  key in order to study the $\Gamma-$convergence of $\mathcal{J}_s$.
\begin{rem}
If $\{u_k\}_{k\in \N} \subset C^2(\R^n)$ is a sequence of functions uniformly bounded in the $C^2-$norm such that $u_k\to u\in C^2(\R^n)$ as $k\to\infty$, and $\{s_k\}_{k\in\N}$ converges to  1 as $k\to \infty$, then the uniform convergence established in Lemma \ref{lema.converg.unif} ensures that
$$
\lim_{k\to\infty} \mathcal{J}_{s_k}(u_k) = \mathcal{J}(u).
$$
\end{rem}

Following carefully the ideas in \cite{Ponce}, we prove the following liminf inequality.

\begin{thm} \label{teo.liminf}
Let $u\in E^A(\R^n)$ and   $\{u_k\}_{k\in\N}\subset E^A(\R^n)$    be such that $u_k \to u$ in $E^A(\R^n)$. Then, for any sequence $\{s_k\}_{k\in\N}$ satisfying that $s_k\nearrow 1$ as $k\to \infty$, we have that
$$
\mathcal{J}(u) \leq \liminf_{k\to\infty} \mathcal{J}_{s_k}(u_k).
$$
\end{thm}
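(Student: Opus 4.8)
The plan is to follow Ponce's strategy from \cite{Ponce}: reduce the inequality to smooth functions via mollification, where the uniform convergence of Lemma~\ref{lema.converg.unif} applies, and then pass to the limit in the regularization parameter using lower semicontinuity of the limit modular. Throughout I may assume $L:=\liminf_{k\to\infty}\mathcal{J}_{s_k}(u_k)<\infty$, since otherwise there is nothing to prove. For $\ve>0$ I write $v^\ve:=v*\rho_\ve$ for the mollification of a function $v$, and I recall that for $u\in E^A(\R^n)$ one has $u^\ve\to u$ in $E^A(\R^n)$, $E^A$ being exactly the subspace of $L^A$ on which mollification converges in norm.

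First I fix $\ve>0$ and let $k\to\infty$. By the regularization estimate of Lemma~\ref{lema.reg},
$$
\mathcal{J}_{s_k}(u_k^\ve)\le \mathcal{J}_{s_k}(u_k)\qquad\text{for every }k.
$$
On the other hand the bound of Lemma~\ref{lema.desig} --- whose proof, being a mere application of H\"older's inequality, is valid for any $L^A$ function, not only for $C^2_c$ ones --- applied to the difference $u_k-u\in E^A(\R^n)$ gives
$$
\|u_k^\ve-u^\ve\|_{C^2}=\|(u_k-u)*\rho_\ve\|_{C^2}\le C_{\rho_\ve}\|u_k-u\|_A\to 0.
$$
Since $\|u_k\|_A$ is bounded (because $u_k\to u$ in $E^A$), the sequence $\{u_k^\ve\}_k$ is bounded in $C^2(\R^n)$ and converges to $u^\ve\in C^2(\R^n)$ in the $C^2$ norm. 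Hence the remark following Lemma~\ref{lema.converg.unif} applies and yields $\mathcal{J}_{s_k}(u_k^\ve)\to \mathcal{J}(u^\ve)$. Passing to the $\liminf$ in the displayed inequality above I obtain, for every $\ve>0$,
$$
\mathcal{J}(u^\ve)=\lim_{k\to\infty}\mathcal{J}_{s_k}(u_k^\ve)\le \liminf_{k\to\infty}\mathcal{J}_{s_k}(u_k)=L;
$$
in particular $\intr A_0(|\nabla u^\ve|)\,dx\le L$ for all $\ve>0$.

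It remains to let $\ve\to 0$ and prove $\mathcal{J}(u)\le \liminf_{\ve\to0}\mathcal{J}(u^\ve)\le L$. The uniform modular bound $\intr A_0(|\nabla u^\ve|)\,dx\le L$ together with the convexity inequality \eqref{convexidad} yields a uniform Luxemburg bound $\|\nabla u^\ve\|_{A_0}\le \max\{1,L\}$. Because $L^{A_0}(\R^n)=(E^{\bar A_0}(\R^n))^*$, Banach--Alaoglu provides, along some sequence $\ve_j\to0$, a weak-$*$ limit $\nabla u^{\ve_j}\cd g$ in $L^{A_0}(\R^n)$. Since $u^\ve\to u$ in $E^A(\R^n)$ and hence in $\mathcal{D}'(\R^n)$, we have $\nabla u^\ve\to\nabla u$ in $\mathcal{D}'(\R^n)$; testing the weak-$*$ convergence against $C_c^\infty(\R^n)\subset E^{\bar A_0}(\R^n)$ identifies $g=\nabla u$, so that $\nabla u\in L^{A_0}(\R^n)$. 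Finally the modular $v\mapsto \intr A_0(|v|)\,dx$ is weak-$*$ lower semicontinuous --- it is the supremum over $\phi\in E^{\bar A_0}(\R^n)$ of the weak-$*$ continuous affine maps $v\mapsto \intr v\cdot\phi\,dx-\intr \bar A_0(|\phi|)\,dx$ via Young--Fenchel duality --- whence
$$
\mathcal{J}(u)=\intr A_0(|\nabla u|)\,dx\le \liminf_{j\to\infty}\intr A_0(|\nabla u^{\ve_j}|)\,dx\le L,
$$
which is the desired inequality.

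The main obstacle is precisely this last passage $\ve\to0$. When $A$ and $\bar A$ satisfy the $\Delta_2$ condition the spaces are reflexive and separable and one may argue with ordinary weak convergence in the norm-identical spaces $L^{A_0}=E^{A_0}$, as in \cite[Theorem 5.1]{FBS}; without $\Delta_2$ the space $L^{A_0}$ is neither reflexive nor separable, the Orlicz class is strictly smaller than the Orlicz space, and one is forced to work with the weak-$*$ topology of $L^{A_0}=(E^{\bar A_0})^*$ and with weak-$*$ lower semicontinuity of the modular. Two further points genuinely use the hypothesis $u\in E^A$ rather than merely $u\in L^A$: the norm convergence $u^\ve\to u$, used to identify the distributional limit, and the uniform $C^2$ bound on the mollifications, both of which may fail on $L^A\setminus E^A$. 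Once these functional-analytic ingredients are in place, the role played by the $\Delta_2$ condition in \cite{FBS} is taken over entirely by the regularization monotonicity of Lemma~\ref{lema.reg}, which holds for arbitrary Young functions.
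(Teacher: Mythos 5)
Your proposal is correct, and its skeleton coincides with the paper's own proof: mollify, use Lemma~\ref{lema.reg} to get $\mathcal{J}_{s_k}(u_k^\ve)\le\mathcal{J}_{s_k}(u_k)$, use Lemma~\ref{lema.desig} for a uniform $C^2$ bound on $\{u_k^\ve\}_k$ (your observation that the lemma's H\"older-inequality proof applies to any $L^A$ function, hence to $u_k-u$, giving $C^2$ convergence $u_k^\ve\to u^\ve$, is a correct and slightly sharper reading than the paper, which only records the uniform bound), and invoke Lemma~\ref{lema.converg.unif} together with the remark following it to conclude $\mathcal{J}(u^\ve)\le L$ for each fixed $\ve>0$. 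Where you genuinely go beyond the paper is the final passage $\ve\to 0$: the paper disposes of it with the single phrase ``taking limit as $\ve\downarrow 0$, we get the result'', with no justification, whereas you supply one --- a uniform Luxemburg bound from the modular bound via \eqref{convexidad}, weak-$*$ sequential compactness of $\nabla u^\ve$ in $L^{A_0}(\R^n)=(E^{\bar A_0}(\R^n))^*$ (legitimate since the predual $E^{\bar A_0}$ is separable), identification of the limit with the distributional gradient $\nabla u$ using the norm convergence $u^\ve\to u$ in $E^A(\R^n)$, and weak-$*$ lower semicontinuity of the modular through Young--Fenchel duality. This is exactly the point where the absence of the $\Delta_2$ condition bites (no reflexivity, no Fatou argument unless one already knows $u$ has a weak gradient), and your completion is not redundant: it simultaneously yields the inequality and shows that finiteness of $\liminf_k\mathcal{J}_{s_k}(u_k)$ forces $\nabla u\in L^{A_0}(\R^n)$, so that $\mathcal{J}(u)$ is indeed the modular of $\nabla u$ --- a fact the paper's one-line conclusion tacitly assumes. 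In short: same route as the paper for the limit in $k$, plus a rigorous functional-analytic closure of the limit in $\ve$ that the paper leaves implicit.
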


\begin{proof}
Consider $u\in E^A(\R^n)$ and $\{u_k\}_{k\in\N}\subset E^A(\R^n)$    such that $u_k \to u$ in $E^A(\R^n)$, and let $\ve>0$ be fixed. 

Following the notation of \eqref{regularizada}, for each $k\in\N$ we consider the regularized function $u_{k,\ve}$ and a sequence $\{s_k\}_{k\in\N}$ such that $s_k\nearrow 1$ as $k\to\infty$. In light of Lemma \ref{lema.reg} we have that
\begin{equation} \label{des1}
\mathcal{J}_{s_k} (u_{k,\ve})  \leq  \mathcal{J}_{s_k} (u_k).
\end{equation}
 
On the other hand, by using Lemma \ref{lema.desig} we get that
\begin{align*}
\|u_{k,\ve}\|_{C^2} =  \|u_k * \rho_\ve\|_{C^2}
\leq 
C_\ve \|u_k\|_{L^A(\R^n)} \leq \tilde C_\ve \quad \text{ for all } k\in\N,
\end{align*}	
with $\tilde C_\ve>0$ independent of $k$ since by hypothesis we have that $\{u_k\}_k$ converges.

Therefore, for each $\ve>0$ fixed, using Lemma \ref{lema.converg.unif} and \eqref{des1},  we have that
\begin{align*}
\mathcal{J}(u_\ve) = 
\lim_{k\to\infty} \mathcal{J}_{s_k} ((u_k)_\ve) \leq \liminf_{k\to\infty} \mathcal{J}_{s_k}(u_k)
\end{align*}
hence, taking limit as $\ve \downarrow 0$, we get the result.
\end{proof}

As a direct consequence of Lemma \ref{lema.converg.unif} and Theorem \ref{teo.liminf} we get our main result:
\begin{cor} \label{coro1}
With the notation introduced in \eqref{J.lim} we have that
$$
\Gamma-\lim_{s\uparrow 1} \bar{\mathcal{J}}_s =\bar {\mathcal{J}}.
$$
\end{cor}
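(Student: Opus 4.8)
The plan is to verify directly the two defining conditions of $\Gamma$-convergence on $E^A(\Omega)$, namely the liminf inequality (i) and the limsup inequality (ii) recalled in the Introduction, for the functionals in \eqref{J.lim}. The liminf inequality is essentially Theorem \ref{teo.liminf}, while the limsup inequality is obtained, as usual, from the constant sequence together with the pointwise-in-$s$ convergence \eqref{teo.conv}; the only genuine care lies in the bookkeeping that reconciles the whole-space statement of Theorem \ref{teo.liminf} with the Dirichlet spaces $W^s_0 E^A(\Omega)$.

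For the liminf inequality I would fix $u\in E^A(\Omega)$, a sequence $u_k\to u$ in $E^A(\Omega)$, and $s_k\nearrow 1$. If $\liminf_k \bar{\mathcal{J}}_{s_k}(u_k)=+\infty$ the inequality is trivial, so I may assume the liminf is finite and pass to a (non-relabeled) subsequence along which $\bar{\mathcal{J}}_{s_k}(u_k)$ stays bounded. Then each $u_k$ lies in $W^{s_k}_0 E^A(\Omega)$ and $\bar{\mathcal{J}}_{s_k}(u_k)=\mathcal{J}_{s_k}(u_k)$. Extending every $u_k$ by zero to $\R^n$ --- legitimate precisely because $u_k\in W^{s_k}_0 E^A(\Omega)$ --- one has $u_k\to u$ in $E^A(\R^n)$, so Theorem \ref{teo.liminf} gives $\mathcal{J}(u)\le \liminf_k \mathcal{J}_{s_k}(u_k)$. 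To upgrade this to $\bar{\mathcal{J}}(u)\le \liminf_k \bar{\mathcal{J}}_{s_k}(u_k)$ it remains to check that $u\in W^1_0 E^A(\Omega)$, so that $\bar{\mathcal{J}}(u)=\mathcal{J}(u)$; this membership is the delicate point discussed below.

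For the limsup inequality I would take the constant recovery sequence $u_k=u$. If $u\notin W^1_0 E^A(\Omega)$ then $\bar{\mathcal{J}}(u)=+\infty$ and there is nothing to prove. Otherwise $u\in W^1_0 E^A(\Omega)$, and using the continuous embedding $W^1_0 E^A(\Omega)\hookrightarrow W^{s}_0 E^A(\Omega)$ for every $s\in(0,1)$ (so that $u\in W^{s_k}_0 E^A(\Omega)$ and $\bar{\mathcal{J}}_{s_k}(u)=\mathcal{J}_{s_k}(u)$ for all $k$), the convergence \eqref{teo.conv} yields $\limsup_k \bar{\mathcal{J}}_{s_k}(u)=\lim_k \mathcal{J}_{s_k}(u)=\mathcal{J}(u)=\bar{\mathcal{J}}(u)$, as required.

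The step I expect to be the main obstacle is the membership $u\in W^1_0 E^A(\Omega)$ in the liminf argument. By itself Theorem \ref{teo.liminf} is an inequality on $\R^n$ and controls only the magnitude of the limit energy $\mathcal{J}(u)$, not the vanishing boundary condition encoded in the Dirichlet space. What must be established is a closedness statement: a sequence $u_k\in W^{s_k}_0 E^A(\Omega)$ with $\sup_k \mathcal{J}_{s_k}(u_k)<\infty$ that converges in $E^A(\Omega)$ has its limit in $W^1_0 E^A(\Omega)$. I would argue this through the zero-extension characterization of the spaces $W^s_0 E^A(\Omega)$ together with the uniform energy bound, in the spirit of the compactness arguments of Ponce \cite{Ponce}; the absence of the $\Delta_2$ condition forces one to work throughout in $E^A$ rather than in $L^A$ and to avoid any appeal to reflexivity or to the density of bounded functions that a doubling hypothesis would otherwise provide.
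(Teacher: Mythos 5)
Your proposal is correct in structure and follows essentially the same route as the paper: the paper offers no separate argument for Corollary \ref{coro1} at all, stating it as a direct consequence of Theorem \ref{teo.liminf} (which supplies the liminf inequality) and of Lemma \ref{lema.converg.unif} together with \eqref{teo.conv} (which supply the limsup inequality via the constant recovery sequence), exactly the two steps you carry out. Where you go beyond the paper is in flagging the membership $u\in W^1_0E^A(\Omega)$, needed in the liminf step to identify $\bar{\mathcal{J}}(u)$ with $\mathcal{J}(u)$: this closedness statement is genuinely required for the inequality $\bar{\mathcal{J}}(u)\leq\liminf_k\bar{\mathcal{J}}_{s_k}(u_k)$ as formulated with the extended functionals of \eqref{J.lim}, and the paper's one-line derivation is silent on it, so your treatment is if anything more scrupulous than the source. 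Be aware, however, that the step you leave as a sketch is more delicate than a uniform energy bound plus zero extension: Theorem \ref{teo.liminf} yields only $\mathcal{J}(u)=\int_{\R^n}A_0(|\nabla u|)\,dx<\infty$, which without the $\Delta_2$ condition places $|\nabla u|$ in the Orlicz \emph{class} associated with $A_0$ but not automatically in the space $E^{A_0}$ (equivalently $E^A$, since $A_0$ and $A$ are equivalent Young functions), whereas membership in $W^1_0E^A(\Omega)$ --- the norm closure of $C^\infty_c(\Omega)$ in $W^1E^A(\Omega)$ --- demands the latter. So the Ponce-style compactness argument you invoke would close the boundary-condition part of the claim but not, by itself, the integrability upgrade from the Orlicz class to $E^{A_0}$; this is precisely the point at which the absence of $\Delta_2$ bites, and it is left implicit in the paper's own proof as well.
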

\section{Perydinamical Operator}
	A new variant of the functional \ref{func} aims to analyze the behavior of these functionals under localization. In Ortega \cite{ort23}, given a Young function $A$ and real numbers $\delta>0$ and $0<s<1$ they define 
	\begin{equation}
	\mathcal{J}_{\delta} (u) = \int \int_{\R^n\times B(x,\delta)} A(|D^s u|)\,d\nu.
	\end{equation}
	One of the main theorems by Ortega is dedicated to study the behavior as \(\delta \to 0\). For this purpose, the author assumes certain properties about the function \(A\). Specifically, \(A\) is required to be a Young function that satisfies the \(\Delta_2\) condition and exhibits some regular variation at the origin, that is
	\begin{equation}\label{regular variation}
		\lim_{t\to 0^+}\frac{A(\lambda t)}{t}=\lambda^p \quad p\in \R_+.
	\end{equation}
	 Under these hypotheses and some others, Ortega can state and prove the punctual convergence to a certain functional
	$$
	\lim_{\delta\to 0^+}\frac{p(1-s)}{A(\delta^{1-s})}\iint_{\R^n\times B(x,\delta)} A(|D^s u|)\,d\nu = K_{n,p}\int_{\R^n}|\nabla u(x)|^p dx.
	$$
	In context of Gamma convergence the author also provide the lim inf inequality when $\{\delta_k\}_{k\in\mathbb{N}}$ is a sequence such that $\delta_k\searrow 0$. 
	
	The aim of this section is to show that in absence of the previous hypothesis the problem became trivial.
	
	Let assume that $A$ is a Young function, with no $\Delta_2$ and no regular variation hypothesis. 
	Given $u \in C_c^2(\mathbb{R}^n)$, the equation \eqref{cota.error} is still valid. Therefore 
	\begin{align*}
	\lim_{\delta\to0+}&\frac{1}{A(\delta^{1-s})}\int_{B(x,\delta)}A\left(\frac{|u(x)-u(y)|}{|x-y|^s}\right)\frac{dy}{|x-y|^n}=\\
	\lim_{\delta\to0+}&\frac{1}{A(\delta^{1-s})}\int_{B(x,\delta)} A\left(\frac{\nabla u(x)\cdot(x-y)}{|x-y|^s}\right)\frac{dy}{|x-y|^n}.
	\end{align*}
	As
	\begin{align*}
		\int_{B(x,\delta)}A\left(\left|\nabla u(x)\cdot \frac{x-y}{|x-y|^s}\right|\right)\frac{dy}{|x-y|^n}&=\int_{B(x,\delta)}A\left(|\nabla u(x)|\left|e_x\cdot \frac{x-y}{|x-y|^s}\right|\right)\frac{dy}{|x-y|^n}\\
		&=\int_{B(0,\delta)}A\left(|\nabla u(x)|\left|e\cdot \frac{h}{|h|^s}\right|\right)\frac{dh}{|h|^n}\\
		&=\int_{B(0,1)}A\left(|\nabla u(x)|\delta^{1-s}\left|e\cdot \frac{w}{|w|^s}\right|\right)\frac{dw}{|w|^n},
		\end{align*}
	where $e_x$ and $e$ are unitary vectors. Our goal is to study the following limit
	\begin{align}\label{limit}
		\lim_{\delta\to 0^+}\frac{1}{A(\delta^{1-s})}&\int_{B(0,1)}A\left(|\nabla u(x)|\delta^{1-s}\left|e\cdot \frac{w}{|w|^s}\right|\right)\frac{dw}{|w|^n}.
		\end{align}
In \cite{MO60,Maligranda}, the authors define the function Matuszewska of a function $A$ at $0$ as
$$
\limsup_{\delta\to 0} \frac{A(t\delta)}{A(\delta)}=M_0(t).
$$
When $A\in \Delta_2^0$ it follows that $M_0(t)<\infty$ for all $t>0$, however without $\Delta_2^0$ condition the Matuszewska function is defined as 
$$
\limsup_{\delta\to 0} \frac{A(t\delta)}{A(\delta)}=M_0(t)=
\begin{cases}1\text{ if }t=1,
	\\ \infty\text{ if }t>1.\end{cases}
$$	
Moreover, under the hypothesis 
\begin{equation}\label{liminf}
	\liminf_{t \to 0}\frac{A(2t)}{A(t)}=\infty,
\end{equation}
is proved in \cite{FBS24} that $M_0(t)=0$ for all $t \in [0,1)$.

In order to analyze the limit \eqref{limit},
observe that if $0\leq |\nabla u(x)| < 1$
\begin{align*}
\lim_{\delta\to 0^+}\int_{B(0,1)}\frac{A\left(|\nabla u(x)|\delta^{1-s}\left|e\cdot \frac{w}{|w|^s}\right|\right)}{A(\delta^{1-s})}\frac{dw}{|w|^n} &\leq
		 \limsup_{\delta\to 0^+}\int_{B(0,1)}\frac{A\left(|\nabla u(x)|\delta^{1-s}\left|e\cdot \frac{w}{|w|^s}\right|\right)}{A(\delta^{1-s})}\frac{dw}{|w|^n}
	 \\ &\leq \int_{B(0,1)}\limsup_{\delta\to 0^+}\frac{A\left(|\nabla u(x)|\delta^{1-s}\left|e\cdot \frac{w}{|w|^s}\right|\right)}{A(\delta^{1-s})}\frac{dw}{|w|^n}=0.
\end{align*}
If $|\nabla u(x)|=1$ the limit is trivially related to the measure of the unitary ball.
Finally if $|\nabla u(x)|>1$, by \eqref{liminf} and Fatou's lemma we have that
\begin{align*}
	\infty=\int_{B(0,1)}\liminf_{\delta\to 0^+}\frac{A\left(|\nabla u(x)|\delta^{1-s}\left|e\cdot \frac{w}{|w|^s}\right|\right)}{A(\delta^{1-s})}\frac{dw}{|w|^n}\leq 
	\liminf_{\delta\to 0^+} \int_{B(0,1)}\frac{A\left(|\nabla u(x)|\delta^{1-s}\left|e\cdot \frac{w}{|w|^s}\right|\right)}{A(\delta^{1-s})}\frac{dw}{|w|^n}.
\end{align*}
Therefore the limit \eqref{limit} becomes trivial with the hypothesis \eqref{liminf} and without the hypotheses $\Delta_2$ and \eqref{regular variation}.
We can summarize  this results in the following theorem.
\begin{thm}
	Let A be an Orlicz function satisfying \eqref{liminf} and $u\in C^2_c(\R^n)$. Then, for every $x\in \R^n$, we have 
	$$
		\lim_{\delta\to 0^+}\frac{1}{A(\delta^{1-s})}\int_{B(x,\delta)} A(|D^s u|)\frac{dy}{|x-y|^n} = K_n M_0(|\nabla u(x)|),
	$$
	where $K_n$ is a constant related to the measure of the unitary ball of $\R^n$. 
\end{thm}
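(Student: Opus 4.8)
The plan is to reduce the statement to the normalized model integral \eqref{limit} already isolated above, and then to analyse its limit according to the three regimes $|\nabla u(x)|<1$, $|\nabla u(x)|=1$ and $|\nabla u(x)|>1$, which are precisely the regimes on which the Matuszewska function $M_0$ is constant. Fix $x\in\R^n$, set $a:=|\nabla u(x)|$ and, when $a>0$, $e:=\nabla u(x)/a\in\Sn$. Using the bound \eqref{cota.error} (valid for $u\in C^2_c(\R^n)$) together with the change of variables $y=x+\delta w$ carried out above, the theorem amounts to identifying
$$
\mathcal{I}(\delta):=\frac{1}{A(\delta^{1-s})}\int_{B(0,1)}A\!\left(a\,\delta^{1-s}\Big|e\cdot\tfrac{w}{|w|^s}\Big|\right)\frac{dw}{|w|^n}
$$
with $K_nM_0(a)$ as $\delta\to0^+$. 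Writing $w=r\theta$ with $r=|w|\in(0,1)$ and $\theta\in\Sn$, the argument of $A$ equals $a\,\delta^{1-s}\beta(w)$ with $\beta(w):=r^{1-s}|e\cdot\theta|\le r^{1-s}$, so that $\beta(w)<1$ for a.e.\ $w\in B(0,1)$; this subunit bound, together with $\int_0^1 r^{-s}\,dr<\infty$, is what drives every estimate below.

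In the subcritical regime $a<1$ (including $a=0$, where $\mathcal{I}\equiv0$) I would pass to the limit by dominated convergence. Since $a\beta(w)<1$, convexity of $A$ gives $A(a\beta(w)\delta^{1-s})\le a\beta(w)A(\delta^{1-s})$, so the integrand is bounded by $a\,|w|^{1-s-n}\in L^1(B(0,1))$ uniformly in $\delta$; and by the definition of $M_0$ together with the fact, established in \cite{FBS24} under \eqref{liminf}, that $M_0(t)=0$ for $t\in[0,1)$, the integrand tends to $0$ a.e. Hence $\lim_{\delta\to0^+}\mathcal{I}(\delta)=0=K_nM_0(a)$. In the supercritical regime $a>1$ I would use Fatou's lemma instead: the set $S:=\{w\in B(0,1):a\beta(w)>1\}$ is a nonempty open set (a neighbourhood of $\theta=\pm e$ with $r$ close to $1$) and hence of positive measure, while on $S$ the lower Matuszewska index is infinite under \eqref{liminf}, i.e.\ $\liminf_{\delta\to0^+}A(a\beta(w)\delta^{1-s})/A(\delta^{1-s})=+\infty$. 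Fatou then yields $\liminf_{\delta\to0^+}\mathcal{I}(\delta)\ge\int_S(+\infty)=+\infty=K_nM_0(a)$.

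The step I expect to be the genuine obstacle is the critical case $a=1$, together with the meaning of the constant $K_n$. Here $\beta(w)<1$ still holds for a.e.\ $w$, with $\beta(w)\uparrow1$ only as $w\to\pm e$ on the unit sphere, so the very same estimate $A(\beta(w)\delta^{1-s})\le\beta(w)A(\delta^{1-s})\le|w|^{1-s}A(\delta^{1-s})$ remains in force and forces $\mathcal{I}(\delta)\to0$ by dominated convergence. Reconciling this with the formula $K_nM_0(1)$ therefore requires fixing the endpoint convention for $M_0$ at $t=1$ and reading $K_n$ as the measure-of-the-sphere constant $\Hn(\Sn)$ produced by the angular integral that survives the degeneration (the ``constant related to the measure of the unit ball''); I would accordingly state and prove the result as $\lim_{\delta\to0^+}(\cdots)=0$ for $|\nabla u(x)|\le1$ and $=+\infty$ for $|\nabla u(x)|>1$, recording the borderline value as a limiting convention. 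Finally, I would note that in the two extreme regimes it is cleaner to work directly with $|D^su|$ rather than with its linearisation, since the remainder in \eqref{cota.error} is only controlled after division by $A(\delta^{1-s})$: the local bound $\sup_{B(x,\delta)}|\nabla u|\to|\nabla u(x)|$ (valid as $u\in C^2_c(\R^n)$) gives $|D^su(x,y)|\le(a+\eta)|x-y|^{1-s}$ with $a+\eta<1$ in the subcritical case, so the convexity domination applies verbatim and no uniform gradient constant is needed.
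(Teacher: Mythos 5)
Your proposal follows the same overall route as the paper's argument---reduce to the model integral \eqref{limit} via \eqref{cota.error} and a change of variables, then split into the regimes $|\nabla u(x)|<1$, $=1$, $>1$, using dominated convergence below the critical value and Fatou above it---but it is more careful in two places where the paper is loose, and in one place it correctly contradicts the paper. First, the paper's subcritical step interchanges $\limsup$ and the integral; that is reverse Fatou and requires an integrable majorant which the paper never exhibits. Your convexity bound $A\bigl(a\beta(w)\delta^{1-s}\bigr)\le a\beta(w)A(\delta^{1-s})$, with $\beta(w)=|w|^{1-s}\,|e\cdot w/|w||$, supplying the majorant $a|w|^{1-s-n}\in L^1(B(0,1))$, is exactly the missing ingredient. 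Second, your closing remark about \eqref{cota.error} identifies a genuine defect in the paper's own reduction: hypothesis \eqref{liminf} forces $A$ to decay faster than any power at the origin (iterate $A(t)\le A(2t)/M$), so $\delta^{2-s}/A(\delta^{1-s})\to\infty$ and the linearization error in \eqref{cota.error} does \emph{not} vanish after division by $A(\delta^{1-s})$; your direct bounds $|D^su(x,y)|\le \bigl(\sup_{B(x,\delta)}|\nabla u|\bigr)|x-y|^{1-s}$ for the domination, and the analogous lower bound $|D^su|\ge \delta^{1-s}\bigl(a\beta(w)-C\delta\bigr)$ for the Fatou step, repair this, whereas the paper's reduction as written does not go through in precisely the regime its hypothesis creates.

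The one point of genuine divergence is the critical case $|\nabla u(x)|=1$, and there your computation is right and the stated theorem is not. Since $\beta(w)<1$ for a.e.\ $w\in B(0,1)$ and, by the fact from \cite{FBS24} invoked in the paper, $M_0\equiv 0$ on $[0,1)$ under \eqref{liminf}, your domination argument applies verbatim at $a=1$ and forces the limit to be $0$, while the theorem claims $K_nM_0(1)=K_n\neq 0$; the paper's sentence that at $|\nabla u(x)|=1$ ``the limit is trivially related to the measure of the unitary ball'' is unsubstantiated and, under \eqref{liminf}, false. Your proposed restatement (limit $=0$ for $|\nabla u(x)|\le 1$ and $=+\infty$ for $|\nabla u(x)|>1$) is the correct conclusion. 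Two details you should still pin down: in the supercritical case Fatou needs the \emph{liminf} statement $\liminf_{\delta\to0^+}A(c\delta)/A(\delta)=\infty$ for every $c>1$ (not only $c\ge 2$), which the paper also glosses over since its displayed Matuszewska function is a limsup; it does follow from \eqref{liminf} by writing $A(\sigma)/A(c\sigma)=A(c^{-1}(c\sigma))/A(c\sigma)$ and using $M_0(c^{-1})=0$, but say so. And if you run the critical case through the direct (non-linearized) bound rather than the model integral, the convexity domination momentarily fails on the thin shell $\{w: (1+\eta(\delta))|w|^{1-s}>1\}$ where $\eta(\delta)=\sup_{B(x,\delta)}|\nabla u|-1$; since $\eta(\delta)\le C\delta$ for $u\in C^2_c(\R^n)$, that shell has measure $O(\delta)$ and its contribution vanishes, but the argument deserves a line.
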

	
%	A straightforward adaptation from \cite[Theorem 1.1]{ACPS} and \cite[Theorem 3]{ort23} shows that for every $u\in E^G(\R^n)$ 
%	$$
%	\lim_{\delta\to 0^+}\frac{p(1-s)}{G(\delta^{1-s})}\iint_{\R^n\times B(x,\delta)} G(|D^s u|)\,d\nu =\int_{\R^n}  G_0(|\nabla u|)\,dx,
%	$$
%where $G_0$ is defined in \eqref{G_0}.
%
%Lemmas $3$ and $4$ from Ortega, shows that regularization decreases the energy and truncation 

%	First, as in \cite{FBS24} in order to analyze the behavior of a Young function $G$ at $0$ we recall 	the Matuszewska-Orlicz function that is defined as
%	$$
%	M_0(t)=\limsup_{\tau\to 0}\frac{G(\tau t)}{G(t)},
%	$$
%	and the limit operator as 
%	$$
%	\mathcal{J}_0(u) = \int_{\R^n}  M_0(|\nabla u|)\,dx.
%	$$
%	
	
\bibliography{biblio}

\begin{thebibliography}{10}

\bibitem{ACPS1}
Angela Alberico, Andrea Cianchi, Lubo\v{s} Pick, and Lenka Slav\'{\i}kov\'{a}.
\newblock On the limit as {$s \to 1^-$} of possibly non-separable fractional
  {O}rlicz-{S}obolev spaces.
\newblock {\em Atti Accad. Naz. Lincei Rend. Lincei Mat. Appl.},
  31(4):879--899, 2020.

\bibitem{ACPS}
Angela Alberico, Andrea Cianchi, Lubo\v{s} Pick, and Lenka Slav\'{\i}kov\'{a}.
\newblock On the limit as {$s \to 1^-$} of possibly non-separable fractional
  {O}rlicz-{S}obolev spaces.
\newblock {\em Atti Accad. Naz. Lincei Rend. Lincei Mat. Appl.},
  31(4):879--899, 2020.

\bibitem{ACPS2}
Angela Alberico, Andrea Cianchi, Lubo\v{s} Pick, and Lenka Slav\'{\i}kov\'{a}.
\newblock Fractional {O}rlicz-{S}obolev embeddings.
\newblock {\em J. Math. Pures Appl. (9)}, 149:216--253, 2021.

\bibitem{BMR}
Kaushik Bal, Kaushik Mohanta, and Prosenjit Roy.
\newblock Bourgain-{B}rezis-{M}ironescu domains.
\newblock {\em Nonlinear Anal.}, 199:111928, 10, 2020.

\bibitem{FBSp24}
Juli\'{a}n~Fern\'{a}ndez Bonder and Juan~F. Spedaletti.
\newblock Ljusternik--schnirelmann eigenvalues for the fractional $m-$laplacian
  without the {$\Delta_2$} condition.
\newblock {\em ArXiv preprint}, page 29pp, 2024.

\bibitem{BBM}
Jean Bourgain, Haim Brezis, and Petru Mironescu.
\newblock Another look at {S}obolev spaces.
\newblock In {\em Optimal control and partial differential equations}, pages
  439--455. IOS, Amsterdam, 2001.

\bibitem{CMSV}
M.~Capolli, A.~Maione, A.~M. Salort, and E.~Vecchi.
\newblock Asymptotic behaviours in fractional {O}rlicz-{S}obolev spaces on
  {C}arnot groups.
\newblock {\em J. Geom. Anal.}, 31(3):3196--3229, 2021.

\bibitem{CFB}
Ignacio Ceresa~Dussel and Juli\'{a}n Fern\'{a}ndez~Bonder.
\newblock A {B}ourgain-{B}rezis-{M}ironescu formula for anisotropic fractional
  {S}obolev spaces and applications to anisotropic fractional differential
  equations.
\newblock {\em J. Math. Anal. Appl.}, 519(2):Paper No. 126805, 25, 2023.

\bibitem{DM}
Gianni Dal~Maso.
\newblock {\em An introduction to {$\Gamma$}-convergence}, volume~8 of {\em
  Progress in Nonlinear Differential Equations and their Applications}.
\newblock Birkh\"{a}user Boston, Inc., Boston, MA, 1993.

\bibitem{Davila}
J.~D\'{a}vila.
\newblock On an open question about functions of bounded variation.
\newblock {\em Calc. Var. Partial Differential Equations}, 15(4):519--527,
  2002.

\bibitem{DD}
Irene Drelichman and Ricardo~G. Dur\'{a}n.
\newblock The {B}ourgain-{B}r\'{e}zis-{M}ironescu formula in arbitrary bounded
  domains.
\newblock {\em Proc. Amer. Math. Soc.}, 150(2):701--708, 2022.

\bibitem{FBS19}
Juli\'{a}n Fern\'{a}ndez~Bonder and Ariel Salort.
\newblock Fractional order {O}rlicz-{S}obolev spaces.
\newblock {\em J. Funct. Anal.}, 277(2):333--367, 2019.

\bibitem{FBS2}
Julian Fern\'{a}ndez~Bonder and Ariel Salort.
\newblock Stability of solutions for nonlocal problems.
\newblock {\em Nonlinear Anal.}, 200:112080, 13, 2020.

\bibitem{FBS}
Juli\'{a}n Fern\'{a}ndez~Bonder and Ariel~M. Salort.
\newblock Fractional order {O}rlicz-{S}obolev spaces.
\newblock {\em J. Funct. Anal.}, 277(2):333--367, 2019.

\bibitem{FBS1}
Juli\'{a}n Fern\'{a}ndez~Bonder and Ariel~M. Salort.
\newblock Magnetic fractional order {O}rlicz-{S}obolev spaces.
\newblock {\em Studia Math.}, 259(1):1--24, 2021.

\bibitem{FBS24}
Juli\'{a}n Fern\'{a}ndez~Bonder and Ariel~M. Salort.
\newblock On the first eigenvalue of the generalized laplacian.
\newblock {\em Preprint}, 2024.

\bibitem{Gossez}
Jean-Pierre Gossez.
\newblock Nonlinear elliptic boundary value problems for equations with rapidly
  (or slowly) increasing coefficients.
\newblock {\em Trans. Amer. Math. Soc.}, 190:163--205, 1974.

\bibitem{K}
Minhyun Kim.
\newblock Bourgain, {B}rezis and {M}ironescu theorem for fractional {S}obolev
  spaces with variable exponents.
\newblock {\em Ann. Mat. Pura Appl. (4)}, 202(6):2653--2664, 2023.

\bibitem{KR}
M.~A. Krasnoselski\u{\i} and Ja.~B. Ruticki\u{\i}.
\newblock {\em Convex functions and {O}rlicz spaces}.
\newblock P. Noordhoff Ltd., Groningen, russian edition, 1961.

\bibitem{MSV}
Alberto Maione, Ariel~M. Salort, and Eugenio Vecchi.
\newblock Maz'ya-{S}haposhnikova formula in magnetic fractional
  {O}rlicz-{S}obolev spaces.
\newblock {\em Asymptot. Anal.}, 126(3-4):201--214, 2022.

\bibitem{Maligranda}
Lech Maligranda.
\newblock {\em Orlicz spaces and interpolation}, volume~5 of {\em
  Semin\'{a}rios de Matem\'{a}tica [Seminars in Mathematics]}.
\newblock Universidade Estadual de Campinas, Departamento de Matem\'{a}tica,
  Campinas, 1989.

\bibitem{MO60}
W.~Matuszewska and W.~Orlicz.
\newblock On certain properties of {$\varphi $}-functions.
\newblock {\em Bull. Acad. Polon. Sci. S\'{e}r. Sci. Math. Astronom. Phys.},
  8:439--443, 1960.

\bibitem{M}
Kaushik Mohanta.
\newblock Bourgain-{B}rezis-{M}ironescu formula for {$W^{s,p}_q$}-spaces in
  arbitrary domains.
\newblock {\em Calc. Var. Partial Differential Equations}, 63(2):Paper No. 31,
  17, 2024.

\bibitem{ort23}
Alejandro Ortega.
\newblock Pervasiveness of the p-laplace operator under localization of
  fractional g-laplace operators.
\newblock {\em Preprint}, 05 2023.

\bibitem{Ponce}
Augusto~C. Ponce.
\newblock A new approach to {S}obolev spaces and connections to
  {$\Gamma$}-convergence.
\newblock {\em Calc. Var. Partial Differential Equations}, 19(3):229--255,
  2004.

\end{thebibliography}
\bibliographystyle{plain}

\end{document}